\theoremstyle{plain}
\newtheorem{theorem}{Theorem}[section]
\newtheorem{corollary}[theorem]{Corollary}
\theoremstyle{definition}
\newtheorem{definition}[theorem]{Definition}
\newcommand{\upcite}[1]{{\rm \textsuperscript{\textsuperscript{\cite{#1}}}}}
\newcommand{\term}[1]{{\textit{\textbf{#1}}}}
\newcommand{\abs}[1]{\lvert#1\rvert}
\newcommand{\norm}[1]{\lVert#1\rVert}
\newcommand{\bigabs}[1]{\bigl\lvert#1\bigr\rvert}
\newcommand{\bignorm}[1]{\bigl\lVert#1\bigr\rVert}
\title[uaw-Dunford-Pettis operator]{Some results on unbounded absolute weak Dunford-Pettis operators}
\date{\today}
\keywords{Unbounded absolute weak convergence, unbounded absolute weak Dunford-Pettis operator, compact operator, Dunford-Pettis operator, weak Dunford-Pettis operator.}
\subjclass[2010]{46A40, 46B42}
\author[H. Li]{Hui Li}
\address{School of Mathematics, Southwest Jiaotong University,
Chengdu, Sichuan,
China, 610000.}
\email{lihuiqc@my.swjtu.edu.cn}
\author[Z. Chen]{Zili Chen}
\address{School of Mathematics, Southwest Jiaotong University, Chengdu, Sichuan,
China, 610000.}
\email{zlchen@home.swjtu.edu.cn}
\begin{document}
\begin{abstract}
In this paper, we characterize  Banach lattices on which each Dunford-Pettis operator (or weak Dunford-Pettis) is unbounded absolute weak Dunford-Pettis operator and the converse. 
\end{abstract}

\maketitle

\section{Introduction}

The notion of unbounded order convergence (uo-convergence, for short) was firstly introduced by Nakano in \cite{N:48}, then it was used and systematically investigated in \cite{G:14,GTX:16,GX:14, LC:18,W:77}.  After that, Bahramnezhad and Azar proposed the definition of unbounded order continuous operators in \cite{BA:18}. A closely related notion of unbounded norm convergence (un-convergence, for short) was introduced and systematically studied in \cite{DOT:16,KMT:16,TR:04}. In \cite[Section 9]{KMT:16}, Kandi\'c et al. gave the definition of (sequentially) un-compact operators and obtained the relationships between weakly compact operators and sequentially un-compact operators. Recently, Zabeti  in \cite{Zabeti:17} proposed a new so-called unbounded version convergence (uaw-convergence). And, uaw-Dunford-Pettis operators were introduced and investigated in \cite{EGZ:17}. 

In this paper, we will establish some results on uaw-Dunford-Pettis operators. We first present some necessary and sufficient conditions for positive Dunford-Pettis operators being uaw-Dunford-Pettis. More precisely, we will prove that each positive Dunford-Pettis operator from a Banach lattice $E$ into arbitrary Banach lattice $F$ is uaw-Dunford-Pettis if and only if the norm of $E'$ is order continuous or $F=\{0\}$ (Theorem \ref{DP_uawDP}). We will also give a characterization of Banach  lattice $E$ on which each positive operator $T: E\rightarrow \ell_{1}$ is uaw-Dunford-Pettis (Theorem \ref{po-uawDP}). 
After that, we will investigate Banach lattices under which each uaw-Dunford-Pettis operator is Dunford-Pettis.  
Finally, we will present the relationships between weak Dunford-Pettis operators and uaw-Dunford-Pettis operators.  Whenever Banach lattice $E$ is Dedekind $\sigma$-complete, we will establish that $E$ is reflexive if and only if each positive weak Dunford-Pettis operator from $E$ into $E$ is an uaw-Dunford-Pettis operator (Theorem \ref{wdp-uawdp}). We will also give some sufficient conditions under which each positive uaw-Dunford-Pettis operator is weak Dunford-Pettis (Theorem \ref{uawdp-wdp1}).

\section{Preliminaries}
To state our results, we need to recall some definitions. Recall that a Riesz space $E$ is an ordered vector space in which $\sup(x,y)$ exists for every $x, y\in E$.  A sequence $(u_{n})$ of a Riesz space is called \term{disjoint} whenever $n\ne m$ implies $u_{n}\perp u_{m}$.  A  Banach lattice is a Banach space $(E, \norm{\cdot})$ such that $E$ is a Riesz lattice and its norm satisfies the following property: for each $x, y\in E$ with $\abs{x}\le \abs{y}$, we have $\norm{x}\le \norm{y}$.
 By Theorem 4.1 of \cite{AB:06},  if $E$ is a Banach lattice, then its norm dual $E'$ is also a Banach lattice.
 
A norm $\norm{\cdot}$ of a  Banach lattice $E$ is \term{order continuous} if for each net $(x_{\alpha})$ in $E$ with $x_{\alpha}\downarrow 0$, one has $\norm{x_{\alpha}}\downarrow 0$.
 A Banach lattice $E$ is said to be a \term{KB-space} whenever every increasing norm bounded sequence of $E_{+}$ is norm convergent. Every KB-space has an order continuous norm.  A  Banach space is said to have the \term{Schur property} whenever  every weak convergent sequence is norm convergent, i.e., whenever $x_{n}\xrightarrow{w}0$ implies $\norm{x_{n}}\rightarrow 0$.

Recall that an operator $T$ from a Banach space $X$ to a Banach space $Y$ is \term{Dunford-Pettis} if  it maps weakly null sequences of $X$ to norm null sequences of $Y$, and is \term{weak Dunford-Pettis} if $f_{n}(T(x_{n}))\rightarrow 0$ for any weakly null sequence $(x_{n})$ in $X$ and any weakly null sequence $(f_{n})$ in $Y'$.

 Recall that a net $(x_{\alpha})$ in a Banach lattice $E$ is said to be \term{unbounded absolutely weakly convergence} to $x\in E$, written as $x_{\alpha}\xrightarrow{uaw}x$, if for any $u\in E_{+}$,$\abs{ x_{\alpha}-x}\wedge u\xrightarrow{w} 0$ holds.  
 
\begin{definition} \upcite{EGZ:17}
 An operator $T$ from a Banach lattice $E$ into a Banach space $X$ is said to be an \term{unbounded absolute weak Dunford-Pettis} (uaw-Dunford-Pettis, for short) if for every norm bounded sequence $(x_{n})$ in $E$, $x_{n}\xrightarrow{uaw} 0$ implies $\bignorm{Tx_{n}}\rightarrow 0$. 
\end{definition} 
 
 Every uaw-Dunford-Pettis operator is continuous. In fact, if $T:E\rightarrow X$ is an uaw-Dunford-Pettis operator and $\norm{x_{n}}\rightarrow  0$, then for each $u\in E_{+}$, $\norm{\abs{x_{n}}\wedge u}\le \norm{\abs{x_{n}}}=\norm{x_{n}}$, i.e., $\norm{\abs{x_{n}}\wedge u}\rightarrow 0$, then $\abs{x_{n}}\wedge u\xrightarrow{w} 0$. That is,  $x_{n}\xrightarrow {uaw} 0$, and so $\bignorm{Tx_{n}}\rightarrow 0$.

All operators in this paper are assumed to be continuous. We refer to ~\cite{AB:06,MN:91} for all unexplained terminology and standard facts on vector and Banach lattices. All vector lattices in this paper are
assumed to be Archimedean.

\section{The relationships with Dunford-Pettis operators}

 There exist operators which are Dunford-Pettis but not uaw-Dunford-Pettis. For example,  the identity operator $Id_{\ell_{1}}:\ell_{1}\rightarrow \ell_{1}$ is Dunford-Pettis since $\ell_{1}$ has the Schur property, but it is not an uaw-Dunford-Pettis operator. In fact, for the standard basis $(e_{n})$ of $\ell_{1}$, $(e_{n})$ is disjoint, so by Lemma 5 of \cite{Zabeti:17},  $e_{n}\xrightarrow{uaw} 0$. However, $\bignorm{Id_{\ell_{1}}(e_{n})}=\norm{e_{n}}=1$. 
  
 The following theorem gives a characterization of Banach lattices $E$ and $F$ under which each positive Dunford-Pettis operator $T:E\rightarrow F$ is uaw-Dunford-Pettis.
  
\begin{theorem}\label{DP_uawDP}
Let $E$ and $F$ be Banach lattices. Then the following assertions are equivalent:
\begin{itemize}
\item[(1)] Each positive Dunford-Pettis operator $T:E\rightarrow F$ is uaw-Dunford-Pettis.
\item[(2)] Each positive compact operator $T:E\rightarrow F$ is  uaw-Dunford-Pettis.
\item[(3)] One of the following conditions is valid:
\begin{itemize}
\item[(i)] The norm of $E'$ is order continuous.
\item[(ii)] $F=\{0\}$.
\end{itemize}
\end{itemize}
\end{theorem}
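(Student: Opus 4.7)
The strategy is to establish the chain $(1) \Rightarrow (2) \Rightarrow (3) \Rightarrow (1)$. The first implication $(1) \Rightarrow (2)$ is immediate, since every compact operator between Banach spaces is Dunford-Pettis.

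For $(2) \Rightarrow (3)$, I will argue by contraposition. Assume $F \neq \{0\}$ and that the norm of $E'$ is not order continuous. By the standard Banach-lattice characterization of order continuity of the dual norm (see for instance \cite{AB:06}), there exists a norm bounded disjoint sequence $(x_n) \subset E_+$ that fails to be weakly null. Splitting a distinguishing functional into positive and negative parts and passing to a subsequence, I obtain $f \in E'_+$ and $\varepsilon > 0$ with $f(x_n) \geq \varepsilon$ for all $n$. Fixing $y \in F_+ \setminus \{0\}$, the rank-one operator $Tx := f(x)\,y$ is positive and, being finite rank, compact. However, Lemma 5 of \cite{Zabeti:17} tells us that bounded disjoint sequences are uaw-null, while $\bignorm{Tx_n} = f(x_n)\bignorm{y} \geq \varepsilon\bignorm{y} > 0$, so $T$ is not uaw-Dunford-Pettis, contradicting (2).

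For $(3) \Rightarrow (1)$, case (ii) is vacuous since $F = \{0\}$ forces every operator to be uaw-Dunford-Pettis. In case (i), the key tool is the result from \cite{Zabeti:17} that when $E'$ has order continuous norm, uaw-convergence coincides with weak convergence on norm bounded subsets of $E$. Given a positive Dunford-Pettis operator $T : E \to F$ and a norm bounded uaw-null sequence $(x_n)$ in $E$, this yields $x_n \xrightarrow{w} 0$, and the Dunford-Pettis property of $T$ then forces $\bignorm{Tx_n} \to 0$, so $T$ is uaw-Dunford-Pettis.

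The main obstacle is locating and correctly applying the two characterizations of order continuity of $E'$: the disjoint-sequence form (bounded disjoint sequences in $E$ are weakly null iff $E'$ is order continuous) used in $(2) \Rightarrow (3)$, and the uaw-versus-weak form (uaw and weak convergence agree on norm bounded sets iff $E'$ is order continuous) used in $(3) \Rightarrow (1)$. Once these are in hand, the rank-one construction and the Dunford-Pettis pull-back are routine, and one sees that the positivity hypothesis is used only to certify positivity of the counterexample in $(2) \Rightarrow (3)$.
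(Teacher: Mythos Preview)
Your proof is correct and follows the same overall scheme as the paper: the chain $(1)\Rightarrow(2)\Rightarrow(3)\Rightarrow(1)$, a rank-one positive counterexample for $(2)\Rightarrow(3)$, and the fact that bounded uaw-null sequences are weakly null when $E'$ is order continuous for $(3)(i)\Rightarrow(1)$ (the paper simply cites Proposition~1 of \cite{EGZ:17} at that step, which encapsulates exactly the argument you spell out). The one genuine difference lies in how the counterexample for $(2)\Rightarrow(3)$ is manufactured: the paper invokes the structural fact that $\ell_1$ sits inside $E$ as a closed sublattice with a positive projection $P:E\to\ell_1$, sets $T=S\circ P$ with $S(\lambda)=\bigl(\sum_n\lambda_n\bigr)y$, and tests on the canonical basis of $\ell_1$; you instead go straight to the disjoint-sequence characterization of order continuity of $E'$, extract $f\in E'_+$ not vanishing along a bounded disjoint $(x_n)\subset E_+$, and take $T=f\otimes y$ tested on $(x_n)$ itself. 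Your route is slightly more elementary, since it bypasses the complemented-$\ell_1$ machinery from \cite{MN:91}, while the paper's construction has the virtue of reusing a template (factor through $\ell_1$, $c_0$, or $\ell_\infty$ via a positive projection) that recurs in several later proofs of the article.
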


\begin{proof}
$(1)\Rightarrow (2)$ It is obvious, since each compact operator is Dunford-Pettis.

$(2)\Rightarrow (3)$ Assume by way of contradiction that the norm of $E'$ is not order continuous and $F\ne \{0\}$. We have to construct a compact operator which is not uaw-Dunford-Pettis.

Since the norm of $E'$ is not order continuous,  it follows from Theorem 2.4.14 and Proposition 2.3.11 of \cite{MN:91} that $\ell_{1}$ is a closed sublattice of $E$ and there exists a positive projection $P:E\rightarrow \ell_{1}$. On the other hand, since $F\ne \{0\}$, there exists a vector $0<y \in F_{+}$. Define the operator $S:\ell_{1}\rightarrow F$ as follows:
$$S(\lambda_{n})=(\sum_{n=1}^{\infty} \lambda_{n})y$$
for each $(\lambda_{n})\in \ell_{1}$. Obviously,  the operator $S$ is  well defined. Let 
$$T=S\circ P: E\rightarrow \ell_{1} \rightarrow F,$$
then $T$ is a compact operator since $S$ is a finite rank operator ( rank is 1). But $T$ is not an uaw-Dunford-Pettis operator. Let $(e_{n})$ be the canonical basis of $\ell_{1}$. Obviously, $(e_{n})$ is disjoint, by Lemma 5 of \cite{Zabeti:17}, we know that $e_{n}\xrightarrow{uaw} 0$. However, $\bignorm{T(e_{n})}=\norm{y}>0$. Hence, $T$ is not an uaw-Dunford-Pettis operator.

$(3)(i)\Rightarrow (1)$ Follows from Proposition 1 of \cite{EGZ:17}. 

$(3)(ii)\Rightarrow (1)$ Obvious.
\end{proof}

Whenever $E=F$ in the Theorem \ref{DP_uawDP}, we get the following characterization:

\begin{corollary}
Let $E$ be a Banach lattice. Then the following assertions are equivalent:
\begin{itemize}
\item[(1)] Each positive Dunford-Pettis operator $T:E\rightarrow E$ is uaw-Dunford-Pettis.
\item[(2)] Each positive compact operator $T:E\rightarrow E$ is  uaw-Dunford-Pettis.
\item[(3)] The norm of $E'$ is order continuous.
\end{itemize}
\end{corollary}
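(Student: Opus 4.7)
The plan is to apply Theorem \ref{DP_uawDP} with $F=E$ and to observe that the disjunction appearing in clause $(3)$ of that theorem collapses to its first alternative when the source and target lattices coincide. The implications $(1)\Rightarrow(2)$ and $(3)\Rightarrow(1)$ transfer with no change: $(1)\Rightarrow(2)$ is immediate because every compact operator is Dunford-Pettis, and $(3)\Rightarrow(1)$ follows from clause $(3)(\mathrm{i})\Rightarrow(1)$ of the theorem.

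For the remaining implication $(2)\Rightarrow(3)$, I would invoke the corresponding implication of Theorem \ref{DP_uawDP}, which yields two possibilities: either the norm of $E'$ is order continuous, or $E=\{0\}$. In the second case, $E'=\{0\}$, whose norm is vacuously order continuous, so condition $(3)$ of the corollary holds in either case. There is no genuine obstacle here; the only point worth flagging is the absorption of the degenerate case $E=\{0\}$ into the order continuity clause, which is what permits the cleaner statement of the corollary compared to the theorem.
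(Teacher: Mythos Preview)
Your proposal is correct and matches the paper's approach exactly: the paper simply presents the corollary as the specialization of Theorem~\ref{DP_uawDP} to the case $F=E$, without writing out a separate proof. Your explicit handling of the degenerate case $E=\{0\}$ (absorbing it into the order continuity clause) is a detail the paper leaves implicit, but it is the right observation to make the reduction airtight.
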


 The following theorem gives a characterization of Banach lattice $E$ for which each positive operator $T:E\rightarrow \ell_{1}$ is uaw-Dunford-Pettis. 
 
\begin{theorem}\label{po-uawDP}
Let $E$ be a Banach lattice, then the following assertions are equivalent:
\begin{itemize}
\item[(1)] Each positive operator from $E$ into $\ell_{1}$ is uaw-Dunford-Pettis.
\item[(2)] The norm of $E'$ is order continuous.
\end{itemize}
\end{theorem}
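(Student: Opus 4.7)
The plan is to reduce both directions to tools already available in the paper. The key observation for $(2)\Rightarrow(1)$ is that $\ell_{1}$ has the Schur property: any bounded operator $T:E\to\ell_{1}$ sends a weakly null sequence $(x_{n})\subset E$ to a weakly null sequence $(Tx_{n})\subset\ell_{1}$, which by the Schur property is norm null. Hence every bounded operator $E\to\ell_{1}$ is automatically Dunford-Pettis, and in particular every positive one is. Assuming the norm of $E'$ is order continuous, Theorem \ref{DP_uawDP}, namely the implication $(3)(i)\Rightarrow(1)$ taken with $F=\ell_{1}$, then yields that every such operator is uaw-Dunford-Pettis.

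For $(1)\Rightarrow(2)$ I would argue by contraposition, reusing the projection construction from the proof of Theorem \ref{DP_uawDP}. Suppose the norm of $E'$ is \emph{not} order continuous. By Theorem 2.4.14 and Proposition 2.3.11 of \cite{MN:91} there is a closed sublattice of $E$ lattice-isomorphic to $\ell_{1}$ together with a positive projection $P:E\to\ell_{1}$. The candidate failing operator is $P$ itself: let $(e_{n})$ be the canonical basis of $\ell_{1}$ regarded as a sequence inside $E$. Since $(e_{n})$ is disjoint and norm bounded in $E$, Lemma 5 of \cite{Zabeti:17} gives $e_{n}\xrightarrow{uaw}0$ in $E$. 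On the other hand $Pe_{n}=e_{n}$ because $P$ is a projection onto $\ell_{1}$, so $\bignorm{Pe_{n}}=1\not\to 0$. Thus $P$ is a positive operator $E\to\ell_{1}$ that is not uaw-Dunford-Pettis, contradicting (1).

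There is no real obstacle here: the two tools needed, namely the $\ell_{1}$-sublattice/positive-projection dichotomy characterizing order continuity of the dual norm and Lemma 5 of \cite{Zabeti:17} promoting disjointness of a bounded sequence to uaw-nullity, do all the work. The only subtlety worth recording is that the embedding of $\ell_{1}$ as a closed sublattice of $E$ ensures $(e_{n})$ is genuinely norm bounded in $E$ while still satisfying $\bignorm{Pe_{n}}$ bounded away from zero, which is what makes the single operator $P$ serve as the counterexample.
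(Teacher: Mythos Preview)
Your proof is correct. The implication $(2)\Rightarrow(1)$ is argued exactly as in the paper: the Schur property of $\ell_{1}$ makes every bounded operator $E\to\ell_{1}$ Dunford--Pettis, and then Theorem~\ref{DP_uawDP} upgrades this to uaw-Dunford--Pettis when the norm of $E'$ is order continuous.

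For $(1)\Rightarrow(2)$ your route differs from the paper's. The paper does not invoke the Meyer--Nieberg projection here; instead it appeals to Theorems~116.1 and~116.3 of \cite{Zaanen:83}: the failure of order continuity of $E'$ yields a norm bounded disjoint positive sequence $(u_{n})$ in $E$ not weakly null, a functional $0\le\phi\in E'$ with $\phi(u_{n})>\varepsilon$, and components $\phi_{n}$ of $\phi$ in the carriers $C_{u_{n}}$ satisfying $\phi_{n}(u_{m})=\delta_{nm}\,\phi(u_{n})$. The paper then builds the explicit positive operator $T(x)=\bigl(\phi_{n}(x)/\phi(u_{n})\bigr)_{n}$ into $\ell_{1}$ and checks $\norm{T(u_{n})}=1$ while $u_{n}\xrightarrow{uaw}0$. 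Your argument is shorter and more economical: you simply recycle the positive projection $P:E\to\ell_{1}$ already used in the proof of Theorem~\ref{DP_uawDP} and observe that $P$ itself is the required counterexample, since $Pe_{n}=e_{n}$ on the embedded basis. What the paper's construction buys is an operator written down concretely via functionals, independent of the sublattice/projection structure theorem; what your approach buys is brevity and a pleasing reuse of the same mechanism that drives Theorem~\ref{DP_uawDP}.
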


\begin{proof}
$(1)\Rightarrow (2)$ Assume by way of contradiction that the norm of $E'$ is not order continuous. 
 Then it follows from Theorem 116.1 of \cite{Zaanen:83} that there exists a norm bounded disjoint sequence $(u_{n})$ of positive elements in $E$ which does not  weakly convergence to zero.  Without loss of generality, we may assume that $\norm{u_{n}}\le 1$ for any $n$. And there exist $\varepsilon >0$ and $0\le \phi\in E'$ such that $\phi(u_{n})>\varepsilon$ for all $n$.  Then by Theorem 116.3 of \cite{Zaanen:83}, we know that the components $\phi_{n}$ of $\phi$ in the carriers $C_{u_{n}}$ form an order bounded disjoint sequence in $(E')_{+}$ such that 
 
\centerline{$\phi_{n}(u_{n})=\phi(u_{n})$\ for\ all\ $n$\quad and \quad $\phi_{n}(u_{m})=0$\ if \ $n\ne m$.} 

Define the positive operator $T:E\rightarrow \ell_{1}$ as follows:
$$T(x)=\left(\frac{\phi_{n}(x)}{\phi(u_{n})}\right)_{n=1}^{\infty}$$
for all $x\in E$. Since 

$$\sum_{n=1}^{\infty}\bigabs{\frac{\phi_{n}(x)}{\phi(u_{n})}}\le \frac{1}{\varepsilon}\sum_{n=1}^{\infty}\phi_{n}(\abs{x})\le  \frac{1}{\varepsilon}\phi(\abs{x})$$
holds for all $x\in E$, the operator $T$ is well defined and it is also easy to see that $T$ is a positive operator. Hence $T$ is an uaw-Dunford-Pettis operator. For the norm bounded disjoint sequence $(u_{n})$, by Lemma 5 of \cite{Zabeti:17}, we know that $u_{n}\xrightarrow{uaw} 0$. 
However, let $(e_{n})$ be the standard basis of $\ell_{1}$, then $\bignorm{T(u_{n})}=\norm{e_{n}}=1$, which is a contradiction.  Therefore, the norm of $E'$ is order continuous.

$(2)\Rightarrow (1)$ Since $\ell_{1}$ has the Schur property,  each positive operator $T$ from $E$ into $\ell_{1}$ is Dunford-Pettis. And since the norm of $E'$ is order continuous,  by Proposition 1 of \cite{EGZ:17}, we obtain that $T$ is uaw-Dunford-Pettis.
\end{proof}

Based on Theorem 5.29 of \cite{AB:06} and Theorem 2.9 of \cite{FKM:17}, we get the following conclusion.

\begin{corollary}
Let $E$ be a Banach lattice, then the following assertions are equivalent:
\begin{itemize}
\item[(1)] The norm of $E'$ is order continuous.
\item[(2)] Each positive operator from $E$ into $\ell_{1}$ is uaw-Dunford-Pettis.
\item[(3)] Each positive operator from $E$ into $\ell_{1}$ is weakly compact, and hence compact.
\item[(4)] Each positive operator from $E$ into $\ell_{1}$ is semi-compact.
\end{itemize}
\end{corollary}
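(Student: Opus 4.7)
The plan is to bolt this corollary onto the preceding theorem and two cited results, since (1) $\Leftrightarrow$ (2) is already exactly Theorem \ref{po-uawDP}. So the only new content is threading (3) and (4) into the equivalence chain. A clean way is to establish (1) $\Leftrightarrow$ (3) and (1) $\Leftrightarrow$ (4) independently, each as a short appeal to a classical characterization plus one extra observation for (3).

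For (1) $\Leftrightarrow$ (3), I would invoke Theorem 5.29 of \cite{AB:06}, which in the relevant form says that the norm of $E'$ is order continuous if and only if every positive operator from $E$ into a Banach lattice with order continuous norm is weakly compact; applied to the target $\ell_{1}$, this gives the weak compactness half. The ``hence compact'' qualifier is then free: since $\ell_{1}$ has the Schur property, every relatively weakly compact subset of $\ell_{1}$ is relatively norm compact, so any weakly compact operator into $\ell_{1}$ is already compact.

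For (1) $\Leftrightarrow$ (4), the equivalence is supplied directly by Theorem 2.9 of \cite{FKM:17}, which characterizes order continuity of the norm on $E'$ in terms of semi-compactness of positive operators from $E$ into $\ell_{1}$. Concatenating these with (1) $\Leftrightarrow$ (2) from Theorem \ref{po-uawDP} closes the four-way equivalence.

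There is no real obstacle here; the argument is essentially a bookkeeping exercise once the two external references are matched to the conclusions. If one prefers to avoid one of the cited theorems, both (3) $\Rightarrow$ (1) and (4) $\Rightarrow$ (1) can be obtained directly by contrapositive using the construction from the proof of Theorem \ref{po-uawDP}: assuming the norm of $E'$ fails to be order continuous, the positive operator $T:E\rightarrow \ell_{1}$ built there maps the norm bounded disjoint sequence $(u_{n})$ to the standard basis $(e_{n})$, which has no weakly (hence no norm) convergent subsequence in $\ell_{1}$, so $T$ cannot be weakly compact, and $\{T(u_{n})\}$ cannot lie in the solid hull of a norm compact set, ruling out semi-compactness as well.
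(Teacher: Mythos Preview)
Your proposal is correct and matches the paper's own approach exactly: the paper simply points to Theorem~\ref{po-uawDP} together with Theorem~5.29 of \cite{AB:06} and Theorem~2.9 of \cite{FKM:17}, and you have just spelled out how those citations thread (3) and (4) into the equivalence, adding the Schur-property remark for the ``hence compact'' clause. Your supplementary contrapositive argument via the operator built in Theorem~\ref{po-uawDP} is a nice self-contained alternative the paper does not give, but the core route is identical.
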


A  Banach lattice is said to have \term{weakly sequentially continuous lattice operations} whenever $x_{n}\xrightarrow{w} 0$ implies $\abs{x_{n}}\xrightarrow{w} 0$. Every $AM$-space has this property.

 The following theorem gives a characterization of Banach lattices $E$ and $F$ for which each uaw-Dunford-Pettis operator $T:E\rightarrow F$ is Dunford-Pettis. 

\begin{theorem}\label{uawdp-dp1}
Let $E$ and $F$ be  Banach lattices. Each uaw-Dunford-Pettis operator $T:E\rightarrow F$ is Dunford-Pettis if one of the following assertions is valid:
\begin{itemize}
\item[(1)] The lattice operations in $E$ are weakly sequentially continuous.
\item[(2)] $E$ is discrete with an order continuous norm.
\item[(3)] $T$ is positive and $F$ is discrete with an order continuous norm.
\end{itemize}
\end{theorem}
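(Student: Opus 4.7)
The plan is to show, under each of the three hypotheses, that $T$ maps every norm-bounded weakly null sequence $(x_{n})\subset E$ to a norm-null sequence in $F$. For (1), the lattice operations of $E$ are weakly sequentially continuous, so $x_{n}\xrightarrow{w}0$ implies $\abs{x_{n}}\xrightarrow{w}0$. For each $u\in E_{+}$ and each $\phi\in E'_{+}$, $0\le\phi(\abs{x_{n}}\wedge u)\le\phi(\abs{x_{n}})\to 0$, and since $E'=E'_{+}-E'_{+}$ this extends to all $\phi\in E'$. Hence $\abs{x_{n}}\wedge u\xrightarrow{w}0$ for every $u\in E_{+}$, so $x_{n}\xrightarrow{uaw}0$, and the uaw-Dunford-Pettis hypothesis on $T$ yields $\norm{Tx_{n}}\to 0$.

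For (2), I would first establish that a discrete Banach lattice with order continuous norm has weakly sequentially continuous lattice operations. A Kadec--Pelczynski disjointification reduces this to the case of bounded disjoint weakly null sequences, where the $1$-unconditional atomic structure makes the claim immediate: absolute values of disjoint components are again disjoint, bounded, and go to zero atom-by-atom. Once this is in hand, (2) is a special case of (1).

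For (3), $E$ need not have weakly sequentially continuous lattice operations, so I would transfer the argument to $F$ via the positivity of $T$. Positivity gives $Tx_{n}\xrightarrow{w}0$ in $F$, and the $F$-analogue of (2) yields $\abs{Tx_{n}}\xrightarrow{w}0$. To pass from weak to norm convergence I would combine three standard facts: (a) $T$ is M-weakly compact, because $T$ is uaw-Dunford-Pettis and every bounded disjoint sequence in $E$ is uaw-null by Lemma 5 of \cite{Zabeti:17}; (b) $T(B_{E})$ is therefore L-weakly compact in $F$, which under order continuity of $F$ is equivalent to being almost order bounded; and (c) writing $\abs{Tx_{n}}=(\abs{Tx_{n}}\wedge v_{\varepsilon})+(\abs{Tx_{n}}-v_{\varepsilon})^{+}$ with $v_{\varepsilon}\in F_{+}$ witnessing almost order boundedness at level $\varepsilon>0$, the first summand is order bounded and weakly null, hence norm-null by atomwise dominated convergence in discrete order continuous $F$, while the second has norm below $\varepsilon$ uniformly in $n$. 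This gives $\limsup_{n}\norm{Tx_{n}}\le\varepsilon$ for each $\varepsilon>0$, hence $\norm{Tx_{n}}\to 0$.

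The main obstacle is case (3), where the weak-to-norm passage in $F$ requires M-weak compactness of $T$, the characterization of L-weakly compact sets in an order continuous Banach lattice as almost order bounded, and the dominated convergence available on the atoms of a discrete order continuous $F$. By contrast, cases (1) and (2) are short reductions to the uaw-Dunford-Pettis hypothesis once the weak sequential continuity of the lattice operations is identified.
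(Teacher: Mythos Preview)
Your treatment of (1) and (2) matches the paper's: weak sequential continuity of the lattice operations turns a weakly null sequence into a uaw-null one, and for (2) one cites (or proves, as you sketch) that discrete Banach lattices with order continuous norm have weakly sequentially continuous lattice operations, reducing to (1).

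For (3), your route is genuinely different from the paper's, and it contains a gap. The paper works at the level of sets: given a relatively weakly compact $W\subset E$ with solid hull $A$, the uaw-Dunford-Pettis property kills bounded disjoint sequences in $A$, so Theorem~4.36 of \cite{AB:06} produces $u\in E_{+}$ with $\|T((|x|-u)^{+})\|<\varepsilon$ on $A$; positivity of $T$ then gives $T(W)\subset[-Tu,Tu]+\varepsilon V$, and compactness of $[-Tu,Tu]$ in discrete order-continuous $F$ finishes. You instead work sequentially and try to get almost order boundedness of $T(B_{E})$ in $F$ from the chain ``$T$ M-weakly compact $\Rightarrow$ $T(B_{E})$ L-weakly compact''. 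That implication is \emph{not} a standard fact: the duality theorem says $T$ is M-weakly compact iff $T'$ is L-weakly compact, which is a statement about $T'(B_{F'})\subset E'$, not about $T(B_{E})\subset F$. What actually makes $T(B_{E})$ almost order bounded here is positivity of $T$ together with Theorem~4.36 applied on the \emph{domain} side (M-weak compactness gives $u\in E_{+}$ with $\|T((|x|-u)^{+})\|<\varepsilon$ on $B_{E}$, and then $|Tx|\le T|x|\le Tu+T((|x|-u)^{+})$). Once you fill (b) in this way you have reproduced the paper's core step, after which your splitting in (c) and the paper's use of compact order intervals are essentially the same device. So your argument can be completed, but the ``therefore'' in (b) hides exactly the nontrivial ingredient, and the fix is the paper's own tool.
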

\begin{proof}
$(1)$ Let $(x_{n})$ be a weakly null sequence in $E$. Since the lattice operations in $E$ are weakly sequentially continuous,  we have $\abs{x_{n}}\xrightarrow{w} 0$. Then for each $u\in E_{+}$, $\abs{x_{n}}\wedge u\xrightarrow{w} 0$, i.e., $x_{n}\xrightarrow{uaw} 0$. Since $T$ is an uaw-Dunford-Pettis operator, we get  $\bignorm{T(x_{n})}\rightarrow 0$. Hence, the operator $T$ is Dunford-Pettis.

$(2)$ Suppose that $E$ is discrete with an order continuous norm, then by Corollary 2.3 of \cite{CW:98}, the lattice operations in $E$ are weakly sequentially continuous. Hence,  following  from (1), we get the result.

$(3)$ Let $T:E\rightarrow F$ be a positive uaw-Dunford-Pettis operator and $W$ be a relatively weakly compact set in $E$, we have to show $T(W)$ is a relatively  compact set in $F$. Let $A$ be the solid hull of $W$ in $E$. For every disjoint sequence $(x_{n})$ in $A$, by Lemma 5 of \cite{Zabeti:17}, we know that $x_{n}\xrightarrow{uaw} 0$. Since $T$ is uaw-Dunford-Pettis, we get that $\bignorm{T(x_{n})}\rightarrow 0$. Then by Theorem 4.36 of  \cite{AB:06}, for each $\varepsilon>0$, there exists some $u\in E_{+}$ lying in the ideal generated by $A$ such that $\bignorm{T[(\abs{x}-u)^{+}]}<\varepsilon$ holds for all $x\in A$. Following from the equality $\abs{x}=\abs{x}\wedge u+(\abs{x}-u)^{+}$, we have 
$$T(\abs{x})=T(\abs{x}\wedge u)+T[(\abs{x}-u)^{+}].$$
Let $V$ be the closed unit ball of $F$. Then 
$$T(\abs{x})\in [-T(u), T(u)] + \varepsilon \cdot V$$
for all $x\in A$. Since $T$ is a positive operator, $\abs{T(x)}\le T(\abs{x})$. It is easy to see that the set $[-T(u), T(u)] + \varepsilon \cdot V$ is a solid set in $F$. Hence, 
$$T(x)\in [-T(u), T(u)] + \varepsilon \cdot V$$
 for all $x\in A$, and then
$$T(W)\subset [-T(u), T(u)] + \varepsilon \cdot V.$$
Since $F$ is discrete with an order continuous norm, $[-T(u), T(u)]$ is norm compact. Hence, $T(W)$ is a relatively compact set in $F$. Thus $T$ is a Dunford-Pettis operator.
\end{proof}

\begin{corollary}\label{uawdp-dp2}
Let $E$ and $F$ be Banach lattices such that the norm of $E'$ is order continuous and $F$ is discrete or its lattice operations are weakly sequentially continuous. Then the following assertions are equivalent:
\begin{itemize}
\item[(1)] Each positive uaw-Dunford-Pettis operator $T:E\rightarrow F$ is Dunford-Pettis.
\item[(2)] One of the following assertions is valid:
\begin{itemize}
\item[(i)] The lattice operations in $E$ are weakly sequentially continuous.
\item[(ii)] The norm of $F$ is order continuous.
\end{itemize}
\end{itemize}
\end{corollary}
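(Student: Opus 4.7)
The plan is to verify $(2)\Rightarrow(1)$ by reducing to Theorem~\ref{uawdp-dp1}, and $(1)\Rightarrow(2)$ by contrapositive, constructing a positive uaw-Dunford-Pettis operator which is not Dunford-Pettis.

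For $(2)\Rightarrow(1)$: if $(2)(i)$ holds, Theorem~\ref{uawdp-dp1}(1) applies immediately (without even needing positivity). If $(2)(ii)$ holds, I would combine it with the standing hypothesis on $F$: when $F$ is discrete with order continuous norm, Theorem~\ref{uawdp-dp1}(3) gives the conclusion at once; when $F$ instead has weakly sequentially continuous lattice operations together with order continuous norm, I would first argue that $F$ must in fact be discrete (any nontrivial atomless band in an order continuous Banach lattice admits Rademacher-like weakly null elements whose moduli are not weakly null, contradicting the lattice-operation hypothesis) and then again appeal to Theorem~\ref{uawdp-dp1}(3).

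For the contrapositive of $(1)\Rightarrow(2)$, assume both conditions in $(2)$ fail. Then there exist a weakly null sequence $(x_n)$ in $E$, $0\le\phi\in E'$, and $\varepsilon>0$ with $\phi(\abs{x_n})>\varepsilon$ for all $n$; simultaneously there exists an order bounded disjoint sequence $(y_n)\subset F_+$ with $\norm{y_n}\ge\delta>0$, dominated by some $v\in F_+$. Mimicking the construction in the proof of Theorem~\ref{po-uawDP}, let $\phi_n$ be the components of $\phi$ in the carriers $C_{\abs{x_n}}$, which form an order bounded disjoint sequence in $E'_+$ with $\phi_n(\abs{x_n})=\phi(\abs{x_n})$ and $\phi_n(\abs{x_m})=0$ for $m\ne n$. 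Set
\[
T(x)=\sum_{n=1}^{\infty}\frac{\phi_n(x)}{\phi(\abs{x_n})}\, y_n.
\]
Absolute convergence and positivity of $T$ follow from $y_n\le v$ combined with $\sum_n\phi_n(\abs{x})\le\phi(\abs{x})$.

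The remaining work is the twofold verification that $T$ is uaw-Dunford-Pettis but not Dunford-Pettis. For the failure of Dunford-Pettis, I would exploit that one of $(x_n^+)$, $(x_n^-)$ must fail to be weakly null (since $\abs{x_n}$ does), and show that $T$, or a closely related modification, sends a weakly null subsequence to a sequence bounded away from zero in $F$. For the uaw-DP property, given any bounded uaw-null sequence $(z_n)\subset E$, the disjointness of the $\phi_n$'s combined with the order continuity of the norm of $E'$, which forces bounded disjoint sequences in $E$ to be weakly null, should yield $\norm{T(z_n)}\to 0$ after a tail-truncation argument exploiting the estimate $\sum_n\phi_n(\abs{z_n})\le\phi(\abs{z_n})$. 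Pinning down the precise test sequence for the failure of DP and controlling the tail of $T(z_n)$ uniformly in $n$ for the uaw-DP verification are, I expect, the main technical obstacles.
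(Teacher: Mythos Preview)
Your treatment of $(2)\Rightarrow(1)$ matches the paper's. For $(2)(ii)$ the paper also invokes Corollary~2.3 of \cite{CW:98} to upgrade ``order continuous norm $+$ weakly sequentially continuous lattice operations'' to ``discrete with order continuous norm'', exactly as you suggest.

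For $(1)\Rightarrow(2)$, however, your plan has a structural gap and diverges from the paper. The carrier-component construction you borrow from Theorem~\ref{po-uawDP} requires the elements $\abs{x_n}$ to be pairwise \emph{disjoint}: Theorem~116.3 of \cite{Zaanen:83} only gives disjoint components $\phi_n$ with $\phi_n(\abs{x_m})=0$ for $m\neq n$ and $\sum_n\phi_n\le\phi$ when the carriers $C_{\abs{x_n}}$ are disjoint. A weakly null sequence witnessing the failure of weak sequential continuity of the lattice operations has no reason to have disjoint moduli; worse, under the standing hypothesis that $E'$ has order continuous norm, every bounded disjoint positive sequence in $E$ is automatically weakly null, so you \emph{cannot} arrange $(\abs{x_n})$ to be simultaneously disjoint and not weakly null. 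Thus the operator $T$ you write down is not well defined, and the subsequent ``obstacles'' you flag (uaw-DP verification, failure of DP) are moot.

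The paper avoids any explicit construction. It observes that the hypothesis $(1)$, together with order continuity of $E'$, forces the class of positive Dunford--Pettis operators $E\to F$ to be closed under domination: if $0\le S\le T$ with $T$ Dunford--Pettis, then $T$ is uaw-Dunford--Pettis by Theorem~\ref{DP_uawDP}, hence so is $S$ (uaw-DP trivially dominates, via $\abs{Sx}\le S\abs{x}\le T\abs{x}$), and then $S$ is Dunford--Pettis by $(1)$. Wickstead's converse domination theorem \cite[Theorem~2]{WA:96} then yields exactly the dichotomy in $(2)$. This route is short and sidesteps all the difficulties in your direct approach.
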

\begin{proof}
$(2)(i)\Rightarrow (1)$ Follows from Theorem \ref{uawdp-dp1}(1).

$(2)(ii)\Rightarrow (1)$ Based on Corollary 2.3 of \cite{CW:98}, if $F$ has an order continuous norm and the lattice operations of it are weakly sequentially continuous, then $F$ is also discrete. Therefore, following  from Theorem \ref{uawdp-dp1}(3), we get the result.

$(1)\Rightarrow (2)$ Let $S: E\rightarrow F$ be a operator which satisfies $0\le S \le T$ and $T: E\rightarrow F$ is a Dunford-Pettis operator. Since the norm of $E'$ is order continuous, by Theorem \ref{DP_uawDP}, we get that the operator $T$ is uaw-Dunofrd-Pettis. Now we claim that $S$ is also uaw-Dunofrd-Pettis, i.e., uaw-Dunford-Pettis opertors satisfy domination. In fact, if  $x_{n}\xrightarrow{uaw} 0$ holds in $E$, then it is easy to see that $\abs{x_{n}}\xrightarrow{uaw} 0$. And so $\bignorm{T(\abs{x_{n}})}\rightarrow 0$ holds in $F$. By using the inequalities $\abs{S(x_{n})}\le S(\abs{x_{n}})\le T(\abs{x_{n}})$, we get that $\bignorm{S(x_{n})}\le \bignorm{T(\abs{x_{n}})}$ for all $n$. That is, $S$ is an uaw-Dunford-Pettis operator. Then $S$ is a Dunford-Pettis operator. Following from Theorem 2 of \cite{W:96}, the lattice operations in $E$ are weakly sequentially continuous or the norm of $F$ is order continuous.
\end{proof}

\begin{corollary}\label{AME}
Let $E$ be an $AM$-space. Then every operator $T$ from $E$ into arbitrary Banach space is uaw-Dunford-Pettis  if and only if $T$ is  Dunford-Pettis.
\end{corollary}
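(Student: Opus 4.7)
The plan is to verify each direction separately, using two structural properties of $AM$-spaces: (a) their lattice operations are weakly sequentially continuous (as noted just before Theorem \ref{uawdp-dp1}), and (b) their norm dual is an $AL$-space, hence has order continuous norm.

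For the forward implication, suppose $T$ is uaw-Dunford-Pettis. I would invoke Theorem \ref{uawdp-dp1}(1). Although that result is stated for a Banach lattice codomain, its proof uses only property (a) of the domain: a weakly null sequence $(x_n)$ in $E$ satisfies $\abs{x_n}\wedge u \xrightarrow{w} 0$ for every $u \in E_+$, i.e., $x_n \xrightarrow{uaw} 0$, and then the uaw-Dunford-Pettis hypothesis on $T$ yields $\norm{Tx_n}\to 0$. Since nothing in this chain uses a lattice structure on the codomain, the argument transfers verbatim to operators into an arbitrary Banach space.

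For the reverse implication, suppose $T$ is Dunford-Pettis. Property (b) puts us exactly in the hypothesis of Proposition~1 of \cite{EGZ:17}---the same result that drove the proof of $(3)(i)\Rightarrow(1)$ in Theorem \ref{DP_uawDP}. That proposition asserts that whenever the norm of $E'$ is order continuous, every Dunford-Pettis operator from $E$ (into a Banach space) is automatically uaw-Dunford-Pettis, which is precisely what is needed here.

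The only point requiring any care---and the main potential obstacle, if any---is confirming that both cited results genuinely accommodate an arbitrary Banach space codomain and arbitrary (not merely positive) operators. For Theorem \ref{uawdp-dp1}(1) this is immediate from inspection of the proof; for Proposition~1 of \cite{EGZ:17} it is part of the original statement. Once both of these are in hand, the corollary follows with no additional work.
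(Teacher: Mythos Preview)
Your proposal is correct and follows essentially the same route as the paper: weakly sequentially continuous lattice operations give uaw-DP $\Rightarrow$ DP via the argument of Theorem~\ref{uawdp-dp1}(1), and order continuity of $E'$ (since $E'$ is an $AL$-space) gives DP $\Rightarrow$ uaw-DP via Proposition~1 of \cite{EGZ:17}. You are in fact more careful than the paper on two points: you explicitly address why the codomain need only be a Banach space (the paper's cited results are stated for Banach lattice codomains), and you attach each ingredient to the correct implication---the paper's own proof has the two directions swapped in its labeling.
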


\begin{proof}
Follows from Remark 1 of \cite{EGZ:17}.
\end{proof}

\section{The relationships with weak Dunford-Pettis operators }

Recall that a  Banach space $X$ is said to have the \term{Dunford-Pettis property} whenever $x_{n}\xrightarrow{w} 0$ in $X$ and $x'_{n}\xrightarrow{w} 0$ in $X'$ imply $x'_{n}(x_{n})\rightarrow 0$. $AL$-space and $AM$-space have the Dunford-Pettis property (\cite[Theorem 5.85]{AB:06}). Obviously, if $X$ has the Dunford-Pettis property, then every continuous operator from $X$ to a Banach space $Y$ is weak Dunford-Pettis.

Since each Dunford-Pettis operator is weak Dunford-Pettis, the identity operator $Id_{\ell_{1}}:\ell_{1}\rightarrow \ell_{1}$ is also the example which is weak Dunford-Pettis but not uaw-Dunford-Pettis. Next, we give a characterization of reflexive Banach lattice for which each positive weak Dunford-Pettis operator from $E$ into $E$ is uaw-Dunford-Pettis operator.

\begin{theorem}\label{wdp-uawdp}
Let $E$ be a Dedekind $\sigma$-complete Banach lattice. Then the following assertions are equivalent:
\begin{itemize}
\item[(1)]$E$ is reflexive. 
\item[(2)] Each positive weak Dunford-Pettis operator from $E$ into $E$ is uaw-Dunford-Pettis.
\end{itemize}
\end{theorem}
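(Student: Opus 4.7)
For $(1)\Rightarrow(2)$, the plan is to upgrade the hypothesis "weak Dunford-Pettis" to "compact" using reflexivity, and then invoke Theorem~\ref{DP_uawDP}. Let $T:E\to E$ be positive and weak Dunford-Pettis. Given a bounded sequence $(x_n)$ in $E$, reflexivity of $E$ provides a weakly convergent subsequence $x_n\xrightarrow{w}x$, and it suffices to show $\|T(x_n-x)\|\to 0$; replacing $x_n$ by $x_n-x$, I may assume $x_n\xrightarrow{w}0$. Pick $f_n\in E'$ of norm one with $f_n(Tx_n)$ close to $\|Tx_n\|$ (Hahn-Banach). By reflexivity of $E'$, extract a subsequence with $f_n\xrightarrow{w}f$, and decompose $f_n(Tx_n)=(f_n-f)(Tx_n)+f(Tx_n)$. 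The first term tends to zero by the weak Dunford-Pettis property applied to the weakly null pair $(x_n,\,f_n-f)$, and the second because $Tx_n\xrightarrow{w}0$ and $f\in E'$. Hence $T$ is compact, in particular Dunford-Pettis. Since reflexivity of $E$ forces the norm of $E'$ to be order continuous, Theorem~\ref{DP_uawDP} concludes that $T$ is uaw-Dunford-Pettis.

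For $(2)\Rightarrow(1)$, I prove the contrapositive by constructing, on a non-reflexive Dedekind $\sigma$-complete $E$, a positive weak Dunford-Pettis operator that is not uaw-Dunford-Pettis. The classical lattice characterization of non-reflexivity says that $E$ contains a closed sublattice lattice-isomorphic either to $\ell_1$ or to $c_0$. If $\ell_1\hookrightarrow E$ as a sublattice, Theorem 2.4.14 of \cite{MN:91} (used already in the proof of Theorem~\ref{DP_uawDP}) gives a positive projection $P:E\to\ell_1$; set $T=J\circ P$ for the inclusion $J:\ell_1\hookrightarrow E$. The Schur property of $\ell_1$ forces $P$ (and hence $T$) to be Dunford-Pettis, therefore weak Dunford-Pettis; on the $\ell_1$-basis $(e_n)$ one has $e_n\xrightarrow{uaw}0$ in $E$ (Lemma 5 of \cite{Zabeti:17}) but $\|Te_n\|=1$. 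Otherwise $c_0\hookrightarrow E$ as a sublattice; Dedekind $\sigma$-completeness of $E$ enlarges this to a sublattice copy of $\ell_\infty$ inside $E$ (a Lozanovsky-type result), and injectivity of $\ell_\infty$ in the category of Banach lattices supplies a positive projection $Q:E\to\ell_\infty$. For $T=J\circ Q$ with $J:\ell_\infty\hookrightarrow E$, the factorization $E\xrightarrow{Q}\ell_\infty\xrightarrow{J}E$ combined with the Dunford-Pettis property of $\ell_\infty$ makes $T$ weak Dunford-Pettis: if $x_n\xrightarrow{w}0$ in $E$ and $f_n\xrightarrow{w}0$ in $E'$, then $Qx_n\xrightarrow{w}0$ in $\ell_\infty$ and $J^{*}f_n\xrightarrow{w}0$ in $(\ell_\infty)'$, so $f_n(Tx_n)=(J^{*}f_n)(Qx_n)\to 0$. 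The $c_0$-basis $(e_n)\subseteq c_0\subseteq\ell_\infty\subseteq E$ is uaw-null in $E$ while $Te_n=e_n$ has norm one, so $T$ is not uaw-Dunford-Pettis.

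The main obstacle is the $c_0$-subcase of the converse. Promoting a $c_0$-sublattice to an $\ell_\infty$-sublattice uses Dedekind $\sigma$-completeness essentially (this is where the hypothesis on $E$ is really doing the work), and extracting the positive projection onto $\ell_\infty$ relies on the injectivity of AM-spaces with order unit in the category of Banach lattices. Verifying that the composition $T=J\circ Q$ is genuinely weak Dunford-Pettis, rather than merely continuous, is the delicate step that exploits the Dunford-Pettis property of $\ell_\infty$; by comparison, the $\ell_1$-subcase and the forward direction $(1)\Rightarrow(2)$ are straightforward once the Hahn-Banach plus reflexivity trick is in hand.
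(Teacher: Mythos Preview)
Your forward direction $(1)\Rightarrow(2)$ is correct; you are simply unpacking by hand the standard fact (which the paper quotes without proof) that a weak Dunford--Pettis operator into a reflexive space is already Dunford--Pettis, and then both you and the paper finish via Theorem~\ref{DP_uawDP}.

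The converse $(2)\Rightarrow(1)$ contains a genuine error in the $c_0$ subcase. Your claim that Dedekind $\sigma$-completeness of $E$ allows a sublattice copy of $c_0$ to be enlarged to a sublattice copy of $\ell_\infty$ is false: take $E=c_0$ itself. This space has order continuous norm and is therefore Dedekind complete, it trivially contains $c_0$, yet being separable it cannot contain $\ell_\infty$. The Lozanovsky-type statement you are reaching for says only that a Dedekind $\sigma$-complete Banach lattice \emph{whose norm is not order continuous} contains $\ell_\infty$; it gives nothing for lattices with order continuous norm that happen to contain $c_0$. Consequently the promised positive projection $Q:E\to\ell_\infty$ need not exist, and your operator $T=J\circ Q$ is undefined in this case.

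The fix is immediate and is precisely what the paper does: work with $c_0$ directly rather than promoting to $\ell_\infty$. When $E$ is not a KB-space, Theorem~2.4.12 of \cite{MN:91} provides a lattice embedding $c_0\hookrightarrow E$ together with a positive projection $P:E\to c_0$. Since $c_0$ has the Dunford--Pettis property, the composition $T=J\circ P:E\to c_0\to E$ is weak Dunford--Pettis by exactly the factorization argument you wrote out for $\ell_\infty$, and the $c_0$-basis $(e_n)$ is a disjoint, hence uaw-null, sequence in $E$ with $\|Te_n\|=1$. Your $\ell_1$ subcase is fine and matches the paper's third step.
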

\begin{proof}
$(1)\Rightarrow (2)$ Since $E$ is reflexive, each weak Dunford-Pettis operator $T$ from $E$ into $E$ is Dunford-Pettis. Based on Theorem 4.70 of \cite{AB:06},  the norm of $E'$ is order continuous. Then by Theorem \ref{DP_uawDP}, we know $T$ is uaw-Dunford-Pettis.

$(2)\Rightarrow (1)$ We first claim that  the norm of $E$ is order continuous. Otherwise,  
it follows from  Corollary 2.4.3 of \cite{MN:91}  that $E$ contains a sublattice which is isomorphic to $\ell_{\infty}$ and there exists a positive projection $P:E\rightarrow \ell_{\infty}$. Let $S: \ell_{\infty}\rightarrow E$ be the canonical injection of $\ell_{\infty}$ into $E$. Define the operator $T$ as follows:

$$T=S\circ P: E\rightarrow \ell_{\infty} \rightarrow E.$$

Since $\ell_{\infty}$ has the Dunford-Pettis property,  $T$ is weak Dunford-Pettis operator. Hence, $T$ is uaw-Dunford-Pettis. Let $(e_{n})$ be the standard basis of $\ell_{\infty}$. Similarily to the proof of Theorem \ref{DP_uawDP}, $e_{n}\xrightarrow{uaw} 0$.  However, $\bignorm{T(e_{n})}=\norm{e_{n}}=1>0$, which is a contradiction. Therefore, $E$ has an order continuous norm.

Next, we prove $E$ is a KB-space. If not, it follows from  Theorem 2.4.12 of  \cite{MN:91} that $E$ contains a sublattice which is isomorphic to $c_{0}$ and there exists a positive projection $P:E\rightarrow c_{0}$. Let $S: c_{0}\rightarrow E$ be the canonical injection of $c_{0}$ into $E$. Define the operator $T$ as follows:

$$T=S\circ P: E\rightarrow c_{0}\rightarrow E.$$

Since $c_{0}$ has the Dunford-Pettis property,  $T$ is a weak Dunford-Pettis operator.  Let $(e_{n})$ be  the standard basis of $c_{0}$. Similarly, $e_{n}\xrightarrow{uaw} 0$.  However, $\bignorm{T(e_{n})}=\norm{e_{n}}=1>0$, we get that $T$ is not an uaw-Dunford-Pettis operator, which is a contradiction. Hence, $E$ is KB-space.

At last, we show that the norm of $E'$ is order continuous. If not, it follows from Theorem 2.4.14 and Proposition 2.3.11 of \cite{MN:91} that $E$ contains a sublattice  which is isomorphic to $\ell_{1}$  and there exists a positive projection $P:E\rightarrow \ell_{1}$. Define the operator $T$ as follows:

$$T=S\circ P: E\rightarrow \ell_{1}\rightarrow E.$$

Since $\ell_{1}$ has the Dunford-Pettis property,  $T$ is a weak Dunford-Pettis operator.  Let $(e_{n})$ be the standard basis of $\ell_{1}$. Similarly, $e_{n}\xrightarrow{uaw} 0$.  However, $\bignorm{T(e_{n})}=\norm{e_{n}}=1>0$, we obtain $T$ is not an uaw-Dunford-Pettis operator, which is a contradiction. Hence, $E'$ has an order continuous norm.

Following from Theorem 4.70 of \cite{AB:06}, we obtain that $E$ is reflexive. 
\end{proof}

Whenever $E\ne F$ in Theorem \ref{wdp-uawdp}, we get the following conclusions.
\begin{corollary}
Let $E$ and $F$ be Banach lattices. If the norm of $E'$ is order continuous and $F$ is reflexive, then each  weak Dunford-Pettis operator from $E$ into $F$ is uaw-Dunford-Pettis operator.
\end{corollary}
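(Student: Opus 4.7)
The plan is to reduce the statement to the Dunford--Pettis case and then apply Theorem \ref{DP_uawDP}. Under the hypothesis that $F$ is reflexive, I will first show that every weak Dunford--Pettis operator $T\colon E\to F$ is in fact Dunford--Pettis; once this is in hand, the order continuity of the norm of $E'$ yields, via the implication $(3)(i)\Rightarrow(1)$ of Theorem \ref{DP_uawDP} (which ultimately rests on Proposition 1 of \cite{EGZ:17}), that $T$ is uaw-Dunford--Pettis.

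To carry out the reduction, let $(x_n)$ be weakly null in $E$ and assume by contradiction that $\norm{Tx_n}\not\to 0$. Passing to a subsequence I may take $\norm{Tx_n}\geq\varepsilon>0$ for all $n$ and, by Hahn--Banach, pick $f_n\in F'$ with $\norm{f_n}=1$ and $f_n(Tx_n)\geq\varepsilon/2$. Since $F$ is reflexive, so is $F'$, hence by Eberlein--\v Smulian the bounded sequence $(f_n)$ admits a weakly convergent subsequence; after relabelling, assume $f_n\xrightarrow{w}f$ in $F'$. Setting $g_n=f_n-f$ produces a weakly null sequence in $F'$, while $(Tx_n)$ is weakly null in $F$ by weak-to-weak continuity of $T$. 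The weak Dunford--Pettis property of $T$ then forces $g_n(Tx_n)\to 0$, and combined with $f(Tx_n)\to 0$ this yields $f_n(Tx_n)=g_n(Tx_n)+f(Tx_n)\to 0$, contradicting $f_n(Tx_n)\geq\varepsilon/2$. Hence $T$ is Dunford--Pettis, and step two of the plan closes the argument.

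The one technical point I anticipate is verifying that the conclusion ``the norm of $E'$ is order continuous implies each Dunford--Pettis operator $E\to F$ is uaw-Dunford--Pettis'' really holds for a possibly non-positive $T$; this is indeed the case since Proposition 1 of \cite{EGZ:17} is stated for general continuous operators, and positivity plays no role in the deduction used at $(3)(i)\Rightarrow(1)$ in the proof of Theorem \ref{DP_uawDP}. Apart from this verification, the argument is a routine Hahn--Banach-plus-reflexivity manipulation, so I do not expect any serious obstacle.
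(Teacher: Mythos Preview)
Your argument is correct and follows exactly the paper's route: first use reflexivity of $F$ to upgrade weak Dunford--Pettis to Dunford--Pettis, then invoke Theorem~\ref{DP_uawDP} via the order continuity of the norm of $E'$. The paper's proof merely asserts the first step without justification, whereas you supply the standard Hahn--Banach/Eberlein--\v{S}mulian verification; you also explicitly address the positivity issue (the corollary concerns arbitrary operators while Theorem~\ref{DP_uawDP} is phrased for positive ones), a point the paper's proof passes over in silence.
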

\begin{proof}
Similarly to the proof of $(1)\Rightarrow (2)$ of the Theorem \ref{wdp-uawdp}. Since $F$ is reflexive, each  weak Dunford-Pettis operator $T$ from $E$ into $F$ is Dunford-Pettis. By  Theorem \ref{DP_uawDP}, we get that $T$ is uaw-Dunford-Pettis.
\end{proof}

\begin{theorem}\label{wdp-uawdp2}
Let $E$  and $F$ be Banach lattices. If each weak Dunford-Pettis operator is uaw-Dunford-Pettis operator, then one of the following assertion is valid:
\begin{itemize}
\item[(1)] The norm of $E'$ is order continuous.
\item[(2)] The norm of $F$ is order continuous.
\end{itemize}
\end{theorem}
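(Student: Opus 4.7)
The plan is to prove the contrapositive: assuming both the norm of $E'$ and the norm of $F$ fail to be order continuous, I will build an operator $T: E \rightarrow F$ that is (even) Dunford-Pettis but not uaw-Dunford-Pettis. This mirrors the counterexample pattern used in the proof of Theorem \ref{wdp-uawdp}, now merging the $\ell_1$-side and $c_0$-side constructions into a single operator.

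First, since $E'$ is not order continuous, Theorem 2.4.14 and Proposition 2.3.11 of \cite{MN:91} (as invoked already in the proof of Theorem \ref{DP_uawDP}) give a closed sublattice of $E$ lattice-isomorphic to $\ell_1$ together with a positive projection $P: E \rightarrow \ell_1$. Next, since $F$ lacks an order continuous norm, $F$ cannot be a KB-space (every KB-space has order continuous norm), so by Theorem 2.4.12 of \cite{MN:91} there is a lattice-isomorphic embedding of $c_0$ into $F$. Denote the images of the standard basis of $c_0$ by $(u_n)\subset F_+$; this is a disjoint, positive sequence whose norms are bounded above and below, say $0<c\le \norm{u_n}\le M$. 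Define $S:\ell_1 \rightarrow F$ by $S((\lambda_n))=\sum_{n=1}^\infty \lambda_n u_n$, which converges absolutely since $\sum\abs{\lambda_n}\norm{u_n}\le M\sum\abs{\lambda_n}$, so $S$ is a well-defined bounded positive operator with $S(e_n)=u_n$. Finally set $T=S\circ P$.

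To verify the two properties: $T$ is Dunford-Pettis (hence weak Dunford-Pettis) because $\ell_1$ has the Schur property, so $S$ is Dunford-Pettis, and composing on the right with the continuous $P$ preserves the Dunford-Pettis class. On the other hand, $T$ is \emph{not} uaw-Dunford-Pettis: the standard basis $(e_n)$ of $\ell_1\subset E$ is norm-bounded and disjoint in $E$, so by Lemma 5 of \cite{Zabeti:17} it is uaw-null in $E$, yet $\norm{T(e_n)}=\norm{S(e_n)}=\norm{u_n}\ge c>0$. This contradicts the hypothesis that every weak Dunford-Pettis operator $E\rightarrow F$ is uaw-Dunford-Pettis. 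The main technical step is simply pinning down which structural results from \cite{MN:91} simultaneously deliver a positive projection onto $\ell_1\subset E$ and a lattice copy of $c_0$ inside $F$ from the two non-order-continuity hypotheses; once these embeddings are in hand, the construction of $T$ and the verifications are routine.
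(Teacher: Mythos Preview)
Your argument is correct and follows the same skeleton as the paper's: factor through $\ell_1$ via the positive projection $P:E\to\ell_1$ coming from the failure of order continuity of $E'$, define $S:\ell_1\to F$ by sending $(\lambda_n)$ to $\sum\lambda_n y_n$ for a suitable sequence $(y_n)$ in $F$, and test $T=S\circ P$ on the standard basis $(e_n)$ of $\ell_1$.

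The one genuine difference is on the $F$ side. You argue by full contradiction: from ``$F$ not order continuous'' you pass through ``$F$ not KB'' to invoke the $c_0$-embedding (Theorem 2.4.12 of \cite{MN:91}) and obtain a specific disjoint sequence $(u_n)$ with $\norm{u_n}\ge c>0$, giving a single counterexample $T$. The paper instead proves the implication directly: assuming only that $E'$ is not order continuous, it takes an \emph{arbitrary} order bounded disjoint sequence $(y_n)$ in $F$, builds $T$ from it, applies the hypothesis to conclude $T$ is uaw-Dunford-Pettis, and reads off $\norm{y_n}=\norm{T(e_n)}\to 0$; by Theorem 4.14 of \cite{AB:06} this forces $F$ to have order continuous norm. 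The paper's route is slightly lighter (it avoids the $c_0$-embedding theorem), while yours has the virtue of exhibiting an explicit counterexample and noting that $T$ is even Dunford-Pettis, not just weak Dunford-Pettis.
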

\begin{proof}
It suffices to establish that if the norm of $E'$ is not order continuous, then $F$ has an order continuous norm.

Since the norm of $E'$ is not order continuous,  it follows from Theorem 2.4.14 and Proposition 2.3.11 of \cite{MN:91} that $\ell_{1}$ is a closed sublattice of $E$ and there exists a positive projection $P:E\rightarrow \ell_{1}$. We need to show that $F$ has an order continuous norm. By Theorem 4.14 of \cite{AB:06}, it suffices to show that each order bounded disjoint sequence $(y_{n})$ is norm convergent to 0  in $F$. 

Define the operator $S:\ell_{1}\rightarrow F$ as follows:
$$S(\lambda_{n})=\sum_{n=1}^{\infty}\lambda_{n}y_{n}$$
for each $(\lambda_{n})\in \ell_{1}$. Obviously, it is  well defined. Let 
$$T=S\circ P: E\rightarrow \ell_{1} \rightarrow F.$$
Since $\ell_{1}$ has the Dunford-Pettis property,  $T$ is a weak Dunford-Pettis operator. Then $T$ is uaw-Dunford-Pettis. Let $(e_{n})$ be the standard basis of $\ell_{1}$,   $e_{n}\xrightarrow{uaw} 0$, so, $\bignorm{T(e_{n})}=\norm{y_{n}}\rightarrow 0$. Hence, $F$ has an order continuous norm.
\end{proof}

At last, we give a characterization of  Banach lattices for which each positive uaw-Dunford-Pettis operator from $E$ into $F$ is weak Dunford-Pettis operator. 

Recall that a Banach lattice is said to have \term{AM-compactness property} if every weakly compact operator from $E$ to an arbitrary Banach space is AM-compact.  The Banach lattices $c_{0}$, $\ell_{1}$, $c$, and $c'$ have AM-compactness property. We have the following conclusion.

\begin{theorem}\label{uawdp-wdp1}
Let $E$ and $F$ be  Banach lattices . Each positive uaw-Dunford-Pettis operator $T:E\rightarrow F$ is weak Dunford-Pettis if one of the following assertions is valid:
\begin{itemize}
\item[(1)] The lattice operations in $E$ are weakly sequentially continuous.
\item[(2)] $F$ is discrete with an order continuous norm.
\item[(3)] $F$ has AM-compact property.
\end{itemize}
\end{theorem}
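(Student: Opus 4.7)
The plan is to handle the three conditions in turn, with the bulk of the work in case (3) since (1) and (2) essentially reduce to the stronger conclusion that $T$ is Dunford-Pettis, and every Dunford-Pettis operator is weak Dunford-Pettis.

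For (1), the argument mirrors that of Theorem \ref{uawdp-dp1}(1): if $x_n\xrightarrow{w}0$ in $E$, weak sequential continuity of the lattice operations gives $\abs{x_n}\xrightarrow{w}0$, hence $\abs{x_n}\wedge u\xrightarrow{w}0$ for every $u\in E_+$; so $x_n\xrightarrow{uaw}0$, and since $T$ is uaw-Dunford-Pettis and the sequence is norm bounded, $\bignorm{Tx_n}\to 0$. For (2), Theorem \ref{uawdp-dp1}(3) already yields that $T$ is Dunford-Pettis under the stated hypotheses, so it is in particular weak Dunford-Pettis.

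For (3), fix a weakly null $(x_n)\subset E$ and a weakly null $(f_n)\subset F'$, and set $M=\sup_n\norm{f_n}<\infty$. I would first apply Theorem 4.36 of \cite{AB:06} on the norm bounded solid hull $A$ of $\{x_n\}$: every disjoint sequence in $A$ is uaw-null by Lemma 5 of \cite{Zabeti:17} and hence sent to a norm null sequence by $T$; so for each $\varepsilon>0$ there exists $u\in E_+$ with $\bignorm{T((\abs{x_n}-u)^+)}<\varepsilon$ for every $n$. Positivity of $T$, together with the Riesz decomposition property applied to $0\le Tx_n^\pm\le T\abs{x_n}=T(\abs{x_n}\wedge u)+T((\abs{x_n}-u)^+)$, then lets me write $Tx_n^\pm=z_n^\pm+s_n^\pm$ with $0\le z_n^\pm\le Tu$ and $\bignorm{s_n^\pm}<\varepsilon$, so that $Tx_n=(z_n^+-z_n^-)+(s_n^+-s_n^-)$ with $z_n^+-z_n^-\in[-Tu,Tu]$ and $\bignorm{s_n^+-s_n^-}<2\varepsilon$.

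To control the order bounded piece I would introduce $S:F\to c_0$ defined by $S(y)=(f_n(y))_{n\ge 1}$, which is well defined because $f_n\xrightarrow{w}0$ in $F'$ forces $f_n(y)\to 0$ for each $y\in F$. Its adjoint $S':\ell_1\to F'$ sends $e_n$ to $f_n$, so $S'(B_{\ell_1})$ lies in the closed absolutely convex hull of the relatively weakly compact set $\{f_n\}$; by Krein's theorem this hull is weakly compact, and by Gantmacher's theorem $S$ is then weakly compact. The AM-compactness property of $F$ now forces $S$ to be AM-compact, so $S([-Tu,Tu])$ is relatively norm compact in $c_0$, and the standard description of compact subsets of $c_0$ yields $c_n:=\sup\{\abs{f_n(y)}:y\in[-Tu,Tu]\}\to 0$. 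Combining the decomposition with this estimate gives $\abs{f_n(Tx_n)}\le \abs{f_n(z_n^+-z_n^-)}+\abs{f_n(s_n^+-s_n^-)}\le c_n+2M\varepsilon$, so $\limsup_n\abs{f_n(Tx_n)}\le 2M\varepsilon$; letting $\varepsilon\to 0$ yields $f_n(Tx_n)\to 0$, as desired.

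The main obstacle will be this final step: translating the AM-compactness property of $F$ into the uniform decay $c_n\to 0$. The key technical observation is that the operator $S:F\to c_0$ built from the weakly null sequence $(f_n)$ is itself weakly compact (via Gantmacher and Krein), which is precisely what lets the AM-compactness hypothesis produce a compact image $S([-Tu,Tu])$; once that is in hand, extracting $c_n\to 0$ from compactness in $c_0$ is routine.
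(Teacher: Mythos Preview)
Your treatment of (1) and (2) matches the paper exactly: both reduce to Theorem~\ref{uawdp-dp1} and the trivial implication Dunford--Pettis $\Rightarrow$ weak Dunford--Pettis.

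For (3) the two arguments diverge after the common step $T(W)\subset[-Tu,Tu]+\varepsilon V$ obtained via Theorem~4.36 of \cite{AB:06}. The paper proceeds abstractly: it invokes Proposition~3.1 and Lemma~4.1 of \cite{AB:13} to conclude that, under the AM-compactness property, order intervals are Dunford--Pettis sets and hence so is $T(W)$; Theorem~5.99 of \cite{AB:06} then finishes. You instead verify the defining condition $f_n(Tx_n)\to 0$ directly by building the auxiliary operator $S:F\to c_0$, $Sy=(f_n(y))_n$, showing it is weakly compact through Krein/Gantmacher applied to $S'$, and then using the AM-compactness property to force $S([-Tu,Tu])$ relatively compact in $c_0$, which yields the uniform decay $c_n\to 0$. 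Your route is longer but entirely self-contained---it effectively re-proves, for the specific situation at hand, the content of the \cite{AB:13} citations---while the paper's route is terse but leans on external black boxes. Both are correct; your argument has the advantage of making transparent exactly how the AM-compactness property enters.
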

\begin{proof}
Since each Dunford-Pettis operator is weak Dunford-Pettis, it follows from Theorem \ref{uawdp-dp1}, if the lattice operations in $E$ are weakly sequentially continuous or $F$ is discrete with an order continuous norm, every positive uaw-Dunford-Pettis operator $T:E\rightarrow F$ is weak Dunford-Pettis.

Next, we only need to show if $F$ has AM-compact property, the assertion is valid. Let $T:E\rightarrow F$ be a positive uaw-Dunford-Pettis operator and $W$ be a relatively weakly compact set in $E$, we have to show $T(W)$ is a Dunford-Pettis set in $F$. Let $A$ be the solid hull of $W$ in $E$ and $V$ be the closed unit ball of $F$. It follows from the proof of Theorem \ref{uawdp-dp1}(3),  for each $\varepsilon>0$, there exists some $u\in E_{+}$ lying in the ideal generated by $A$ such that  
$$T(W)\subset [-T(u), T(u)] + \varepsilon \cdot V.$$
Since $F$ has AM-compact property, based on  Proposition 3.1 and Lemma 4.1 of \cite{AB:13}, we get that  $T(W)$ is a Dunford-Pettis set in $F$. Therefore, following from Theorem 5.99 of \cite{AB:06}, $T$ is a weak Dunford-Pettis operator.
\end{proof}

\end{document}